\newtheorem{theorem}{Theorem}
\newtheorem{corollary}{Corollary}
\newtheorem*{corollaryo}{Corollary}
\newtheorem*{lemmao}{Lemma}
\newtheorem{proposition}{Proposition}
\author{M.~Hellus
\and A.~Rechenauer
\and R.~Waldi}
\title{On the order of magnitude of certain integer sequences\\[2ex]       
{\normalfont \normalsize In memory of our revered teacher Ernst Kunz.}}
\begin{document}
\maketitle
\begin{abstract}
Let $p$ be a prime number, and let $S$ be the numerical semigroup generated by the prime numbers not less than $p$. We compare the orders of magnitude of some invariants of $S$ with each other, e.\,g., the biggest atom $u$ of $S$ with $p$ itself:

By Harald Helfgott (\cite{helfgott2014}), every odd integer $N$ greater than five can be written as the sum of three prime numbers. There is numerical evidence suggesting that the summands of $N$ always can be chosen between $\frac N6$ and $\frac N2$. This would imply that $u$ is less than $6p$.
\end{abstract}
\section{Notation}
Let $\pi(x)$ be the number of primes less than or equal to $x$ (\emph{prime-counting function}), $p=p(n)$ the prime number with $n=\pi(p)$ and $S=S(n)$ the additive monoid (\emph{numerical semigroup}) generated by the prime numbers not less than $p$. Those positive integers which do not belong to $S$ are called \emph{gaps}, their number $g=g(n)$ is the \emph{genus} of $S$. The \textit{Frobenius number} $f=f(n)$ of $S$ is the largest gap of $S$. Hence $s=s(n)\coloneqq f(n)+1-g(n)$ equals the number of elements of $S$ which are less than $f$; these elements are called \emph{sporadic} for $S$. The \emph{embedding dimension} $e=e(n)$ of $S$ ist the number of \emph{atoms} of $S$, where the latter are those elements of $S$ which cannot be written as the sum of two positive elemente of $S$. By $u=u(n)$ we denote the greatest atom of $S$.

Pairs $(v,w)=(v(n),w(n))$ of sequences of positive real numbers are called \emph{of the same order of magnitude} if $\inf(\frac vw)>0$ and $\sup(\frac vw)<\infty$. The functions $v$ and $w$ are called \emph{asymptotically equal}, $v\sim w$ for short, if $\lim_{n\to\infty}\frac{v(n)}{w(n)}=1$. Trivially, asymptotically equal sequences are of the same order of magnitude.
\section{Results}
We are going to present asymptotical statements about the pairs $(e,n), (f,p), (g,p), (s,p)$ and $(u,p)$.
\begin{proposition}
\begin{enumerate}[label=\alph*)]
\item $g\sim\frac52p$, \cite[Prop. 6]{hrw2020}.
\end{enumerate}
\vspace{.2cm}
There exists $n(0)>0$ such that for all $n\geq n(0)$:
\begin{adjustwidth}{2.5em}{0pt}
\begin{enumerate}
\item[b)] $3p-6\leq f<4p+p^{0.7}$.
\item[c)] $|\frac up-3|<\frac1{p^{0.3}}$. In particular, $u\sim 3p$.
\end{enumerate}
\xdef\tpd{\the\prevdepth}
\end{adjustwidth}
\begin{enumerate}
\item[d)] $e\sim 2n$, \cite[Thm. 1]{hrw2020}.
\end{enumerate}
\end{proposition}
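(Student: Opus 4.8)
Since a) and d) are quoted from \cite{hrw2020}, I would only prove b) and c), and the engine throughout is the trivial parity observation that every generator and every atom of $S$ is an \emph{odd} prime $\geq p$, so that a sum of $k$ of them has the parity of $k$ and is $\geq kp$. The two lower bounds come out by hand. The number $3(p-2)=3p-6$ is odd, composite and lies in $[2p,3p)$ for $p\geq 7$; it is therefore neither a single prime, nor a sum of two primes $\geq p$ (that value is even), nor a sum of three or more (that value is $\geq 3p$), hence a gap, giving $f\geq 3p-6$. Dually, every prime $q\in[p,3p)$ is an atom, since writing $q$ as a sum of $\geq 2$ primes $\geq p$ would force either an even value (two summands) or a value $\geq 3p$ (three or more). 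Because prime gaps near $3p$ are $\ll p^{0.525}=o(p^{0.7})$, the largest such prime exceeds $3p-p^{0.7}$, so $u>3p-p^{0.7}$.

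For the upper bounds I would show that $S$ contains every $N$ beyond the stated thresholds. Parity forces any representation of an odd $N$ to use an odd number of primes $\geq p$, and of an even $N$ an even number; since binary Goldbach is out of reach, for even $N$ I aim at four summands, which needs $N\geq 4p$, whereas for odd $N$ three summands suffice, needing $N\geq 3p$. In the decisive range each summand is then confined to the \emph{short} interval $[p,\,p+H]$ with $H=N-3p$ (odd, three summands) resp.\ $H=N-4p$ (even, four summands), because the largest summand equals $N-2p$ resp.\ $N-3p$. Thus it suffices to prove that for odd $N\geq 3p+p^{0.7}$ the number $r_3(N)$ of representations $N=q_1+q_2+q_3$ with $q_i$ prime in $[p,p+H]$ is positive, and likewise $r_4(N)>0$ for even $N\geq 4p+p^{0.7}$. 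The first gives $u<3p+p^{0.7}$ (no larger prime can be an atom) and, with the second, $f<4p+p^{0.7}$; combined with the lower bounds this is exactly $|\frac up-3|<\frac1{p^{0.3}}$, whence $u\sim 3p$.

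The heart is therefore a circle-method estimate for these short-interval counts. Writing $S(\alpha)=\sum_{p\leq m\leq p+H}\Lambda(m)e(m\alpha)$, the major arcs should contribute the expected main term $\asymp \mathfrak S(N)\,\Vol(\Delta)/\log^{k}p$, where $\Delta$ is the simplex $\{t_i\geq 0,\ \sum t_i=H\}$ of dimension $k-1$, so $\Vol(\Delta)\asymp H^{k-1}$ with $k=3$ resp.\ $4$; this is $\gg p^{1.4}/\log^3 p$ resp.\ $\gg p^{2.1}/\log^4 p$ once $H\geq p^{0.7}$. The crux, and the step I expect to be hardest, is bounding the minor arcs for the \emph{short} sum $S$: one needs the $L^2$-mean $\int_0^1|S|^2\sim H/\log p$ (legitimate because $0.7$ exceeds Huxley's exponent $7/12$ for the asymptotic count of primes in $[p,p+H]$) together with a pointwise minor-arc saving for $S$ strong enough that $\int_{\mathrm{minor}}|S|^{k}$ stays below the main term. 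It is exactly the calibration of this saving against a main term of size only $\asymp H^{k-1}$ that pins the exponent at $0.7$. Note finally that the cruder $[\tfrac N6,\tfrac N2]$ heuristic recalled in the abstract uses only wide-interval information and would yield merely $u<6p$; it is the short-interval input near $p$ that upgrades the bound all the way to $u\sim 3p$.
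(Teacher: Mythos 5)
Your lower bounds are correct and in substance identical to the paper's: the parity/compositeness argument showing $3p-6$ is a gap (the paper simply quotes this as \cite[Prop.~1]{hrw2020}), and the prime-gap exponent $21/40=0.525$, i.e.\ Baker--Harman--Pintz \cite[Thm.~1.1]{bhp2001}, to produce a prime, hence an atom, in $[3p-p^{0.7},3p]$. Up to that point you are re-deriving exactly what the paper does.

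The genuine gap is in the upper bounds. The paper runs no circle-method argument at all: it quotes Matom\"aki--Maynard--Shao \cite[Thm.~1.1]{mmv2017} (``Vinogradov's theorem with almost equal summands''), which already asserts that every sufficiently large odd $N$ is $q_1+q_2+q_3$ with $|N/3-q_i|\leq N^{0.6}$, and then checks by a short monotonicity computation that $N\geq 3p+p^{0.7}$ forces $q_i\geq N/3-N^{0.6}>p$, so all three summands are generators of $S$; the even case of $f<4p+p^{0.7}$ is then handled by subtracting a single generator (for even $N\geq 4p+p^{0.7}$ the number $N-p$ is odd and $\geq 3p+p^{0.7}$), not by a four-prime circle method. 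You instead propose to prove the needed ``three primes in a short interval'' statement from scratch, and you yourself flag the minor-arc estimate as the step you cannot supply. That step is precisely the deep content of the whole matter: a pointwise power-saving bound on minor arcs for $\sum_{p\leq m\leq p+H}\Lambda(m)e(m\alpha)$ with $H\asymp p^{0.7}$ is a theorem-level result (Zhan's exponent $5/8$, improved to $11/20$ in \cite{mmv2017}), and the $L^2$ mean value together with Huxley's $7/12$ does \emph{not} yield it; without a pointwise saving, $\int_{\mathrm{minor}}|S|^{3}\leq\sup_{\mathrm{minor}}|S|\cdot\int_0^1|S|^2$ cannot be pushed below a main term of size only $\asymp H^2/\log^3 p$. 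So as written, your argument assumes rather than proves its key analytic input; it becomes a proof only if you replace the sketch by a citation of such a theorem, which is exactly what the paper does. Incidentally, your remark that the calibration ``pins the exponent at $0.7$'' is misleading: $0.7$ is not forced by any minor-arc calibration (the literature reaches $0.55$); in the paper it arises merely because $N^{0.6}=o(p^{0.7})$ makes the reduction to \cite{mmv2017} clean.
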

\begin{corollaryo}
$f,g,s=f+1-g,u$ and $p$ are of the same order of magnitude.
\end{corollaryo}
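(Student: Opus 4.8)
The plan is to exploit that ``being of the same order of magnitude'' is essentially an equivalence relation and to reduce the whole statement to comparing each of $f,g,s,u$ with $p$ separately. Writing $v\approx w$ for the relation $0<\inf(v/w)$ and $\sup(v/w)<\infty$, one checks at once that $\approx$ is reflexive (since $v/v\equiv 1$), symmetric (since $\inf(w/v)=1/\sup(v/w)$ and $\sup(w/v)=1/\inf(v/w)$) and transitive (from $v/w=(v/p)/(w/p)$ one gets $\sup(v/w)\le\sup(v/p)/\inf(w/p)<\infty$ and $\inf(v/w)\ge\inf(v/p)/\sup(w/p)>0$). Hence it suffices to establish $f\approx p$, $g\approx p$, $s\approx p$ and $u\approx p$; the $\approx$-class of $p$ then contains all five sequences, which is exactly the assertion.

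For $g$ and $u$ this is immediate from parts a) and c): the relations $g\sim\frac52 p$ and $u\sim 3p$ say that $g/p\to\frac52$ and $u/p\to 3$, so each quotient is eventually confined to a compact subinterval of $(0,\infty)$. For $f$ I would simply divide the two-sided bound in b) by $p$, obtaining $3-\frac6p\le f/p<4+p^{-0.3}$ for all $n\ge n(0)$, which again traps $f/p$ in a bounded interval bounded away from $0$.

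The only genuinely delicate case is $s=f+1-g$, since it is a difference of two quantities that are both of order $p$, and a priori the leading terms could cancel down to something of smaller order. Here I would combine b) and a): from $f/p\ge 3-\frac6p$ and $g/p\to\frac52$ one gets, for every small $\varepsilon>0$ and all sufficiently large $n$,
\[
\frac sp=\frac fp+\frac1p-\frac gp\ \ge\ \Bigl(3-\frac6p\Bigr)+\frac1p-\Bigl(\tfrac52+\varepsilon\Bigr)\ \longrightarrow\ \tfrac12-\varepsilon ,
\]
so $s/p$ is eventually bounded below by a positive constant, while the upper bound $s/p\le f/p+1/p<4+p^{-0.3}+1/p$ is clear. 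This gives $s\approx p$. The real content of the argument is precisely the numerical comparison $\lim(f/p)\ge 3>\tfrac52=\lim(g/p)$, which guarantees that $f-g$ does not fall below order $p$; this is the step I expect to be the main obstacle, as it is where the explicit constants from b) and a) must be weighed against each other.

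Finally I would dispose of the finitely many indices $n<n(0)$ not covered by b) and c). For each such $n$ the numbers $p,f,g,u$ are positive integers, and $s=f+1-g$ counts the elements of $S$ below $f$, of which there is at least one (namely $0$), so $s$ is a positive integer as well. Finitely many positive values can neither push an infimum down to $0$ nor a supremum up to $\infty$, so they leave all four relations intact. Together with the asymptotic estimates above this yields $f\approx g\approx s\approx u\approx p$, as claimed.
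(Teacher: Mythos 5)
Your proof is correct and follows exactly the route the paper intends: the corollary is stated as an immediate consequence of Proposition~1, with $g/p\to\frac52$, $u/p\to3$, the two-sided bound $3-\frac6p\le f/p<4+p^{-0.3}$, and the cancellation estimate $s/p\ge f/p+\frac1p-g/p\to\ge\frac12$ handling the only delicate case, plus the observation that finitely many initial terms cannot spoil positivity of the infimum or finiteness of the supremum. Your write-up simply makes explicit what the paper leaves implicit, including the key numerical comparison $3>\frac52$ that keeps $s$ of order $p$.
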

For the minimums of the quotients $e/n, f/p, g/p$ and $u/p$ one has:
\begin{proposition}
\begin{enumerate}[label=\alph*)]
\item $\min(f/p)=\min(g/p)=\frac12$ (this is clear).
\item $\min(u/p)=\frac32$.
\item $\min(e/n)=\frac54$; $e(n)>n$ and $s(n)>n-1$ for all $n>0$.
\end{enumerate}
\end{proposition}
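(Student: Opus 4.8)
Parts (a) and (b) are short. For (a), every nonzero element of $S$ is a sum of primes each $\ge p$ and hence is $\ge p$, so $1,\dots,p-1$ are gaps; thus $f \ge p-1$ and $g \ge p-1$, giving $f/p, g/p \ge (p-1)/p \ge 1/2$ for all $p \ge 2$, with equality exactly at $p=2$. Since $f=g=1$ at $n=1$, both minima equal $1/2$. For (b) I would record the fact that drives the whole proposition: for $p \ge 3$ all generators are odd, so an odd prime $q$ with $p \le q < 3p$ is neither a sum of two generators (which is even) nor of three or more (which is $\ge 3p$), hence is an atom. Applying Bertrand's postulate to $3p/2$ gives a prime $q$ with $3p/2 < q < 3p$; this $q$ is an atom, so $u > 3p/2$ for every $p \ge 3$, while at $n=1$ the atoms are $2$ and $3$, so $u/p = 3/2$. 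Hence $\min(u/p)=3/2$, attained only at $n=1$.

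Part (c) is the hard one, because the atoms do not stop at $3p$. The same parity count shows that for $p \ge 3$ an odd prime $q < 5p$ is an atom exactly when it is not a sum of three primes each $\ge p$ (sums of an even number of generators are even; sums of five or more are $\ge 5p$), and by $u \sim 3p$ from Proposition 1(c) every atom lies in this range. This gives the clean lower bound $e \ge \pi(3p-1)-\pi(p-1)$, the number of primes in $[p,3p)$. For large $n$ I would feed explicit prime-counting estimates (Rosser--Schoenfeld, Dusart) into this: since $\pi(3p)/\pi(p) \to 3$, they yield $\pi(3p-1)-\pi(p-1) \ge \tfrac54 n$ once $p$ passes an explicit threshold $P_0$, so $e/n \ge 5/4$ there; the same estimates give $e > n$, and, since every prime in $[p,f)$ is an element of $S$ below $f$, the bound $f \ge 3p-6$ from Proposition 1(b) makes $s = f+1-g$ exceed $n-1$ as well.

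The delicate part is the finite range $p \le P_0$, where the crude bound is strictly too weak: at $n=8$, $p=19$ it gives only $9/8 < 5/4$, whereas the true value is exactly $5/4$ owing to the single extra atom $59$ just above $3p=57$. One must therefore compute $e$ exactly, i.e. decide for each odd prime in a bounded window above $3p$ whether it is a sum of three primes $\ge p$; the window is finite precisely because $u \sim 3p$, and can be capped using the effective ternary-Goldbach input underlying Proposition 1(c). I expect this computation to give $e/n > 5/4$ throughout the range apart from the unique minimiser $n=8$ ($p=19$), and to confirm $e > n$ and $s > n-1$ there too. The main obstacle is exactly this mismatch: the minimiser sits in the regime where the easy analytic bound fails, so the proof cannot sidestep an exact accounting of the anomalous atoms near $3p$, which itself rests on an effective form of the ternary Goldbach theorem.
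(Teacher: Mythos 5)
Your parts a) and b) are correct, and b) follows a genuinely different, more elementary route than the paper's. The paper proves the stronger fact $u(n)/p(n)>2$ for every $n>1$, which costs it an appendix computation for $2\le n\le 428$ and, for $n>428$, the analytic inputs $\pi(3p(n))>2n$ (from the proof of \cite[Prop.~5]{hrw2020}) and $2n>\pi(2p(n))$ (Rosser--Schoenfeld \cite{rs1975}). You observe that only $u>\frac32p$ is needed for the minimum, and that Bertrand's postulate applied at $\frac32p$ already produces a prime in $(\frac32p,3p)$, which is an atom by the same parity argument the paper uses (an odd prime in $[p,3p)$ is not a sum of two generators, which would be even, nor of three or more, which would be $\ge3p$). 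This removes both the computation and the analytic estimates; what it gives up is only the sharper constant $2$, which the proposition does not need.

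For c), your architecture is the same as the paper's---explicit prime-counting estimates above a threshold (the paper gets $\pi(3p')>2.4n$ for $n>7495$ via the monotone function $\lambda$, whence $e(n)\ge\pi(3p')-\pi(p)+1>1.4n>\frac54n$ and likewise $s(n)>1.4n$), plus an exact computation below it (the GAP code of the appendix, $n<7496$)---and you correctly identify that the minimizer $n=8$ ($p=19$, with the extra atom $59>3p=57$) forces the finite part to be exact rather than estimated. But one step of your plan is genuinely broken: you want to cap the window of potential atoms ``using the effective ternary-Goldbach input underlying Proposition 1(c).'' No such effective input exists. Proposition 1(c) rests on \cite[Thm.~1.1]{mmv2017}, which is ineffective, and the paper itself stresses in section 6 that it could not find an effective version in the literature; consequently the $n(0)$ of Proposition 1(c) is not explicit and yields no bound on $u(n)$ for any specific $n$. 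The cap must come from elsewhere, and the appendix shows the standard fix: generate the semigroup from the primes in $[p,6p]$ only, verify computationally that its Frobenius number satisfies $f<4p$, and conclude a posteriori that every prime $q>6p$ has $q-p>5p>f$, so $q-p\in S$ and $q=p+(q-p)$ is not an atom; the minimal generating system computed from that finite presentation is therefore the full atom set. (Beware of the same trap in your large-$n$ argument for $s(n)>n-1$: you invoke $f\ge3p-6$ via Proposition 1(b), whose $n(0)$ is again ineffective; use instead the unconditional fact, as in \cite[Prop.~1]{hrw2020}, that for $p\ge5$ the number $3p-6$ is odd, divisible by $3$, hence not a generator, and too small to be a sum of three generators, so it is a gap.) With that repair, and with the finite computation actually carried out up to your explicit threshold, your proof of c) coincides with the paper's.
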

\section{Proofs}
For the proof of proposition 1 we need the following improvement of \cite[Le\-mma~4.1]{hrw2021}:
\begin{lemmao}
There exists $n(0)>0$ such that for all $n\geq n(0)$ one has: Every odd number $N\geq3p(n)+p(n)^{0.7}$ is the sum of three primes from $S(n+1)$, in particular one has
\[f(n+1)<3p(n)+p(n)^{0.7}\text{ if }f(n+1)\text{ is odd and}\]
\[f(n+1)<3p(n)+p(n+1)+p(n)^{0.7}\text{ if }f(n+1)\text{ is even.}\]
\end{lemmao}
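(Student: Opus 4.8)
The two displayed inequalities are easy consequences of the main assertion, so the plan is to concentrate on showing that every odd $N\ge 3p(n)+p(n)^{0.7}$ is a sum of three primes $\ge p(n+1)$, and to deduce the Frobenius bounds at the end. Write $p=p(n)$ and $L=p(n+1)$. The first observation is that the constraint is essentially a \emph{short-interval} ternary Goldbach problem: if $q_1+q_2+q_3=N$ with each $q_i\ge L$, then automatically $q_i=N-q_j-q_k\le N-2L$, so every $q_i$ lies in $[L,\,N-2L]$, an interval of length $N-3L\ge p^{0.7}-3(L-p)$. By the Baker--Harman--Pintz bound $L-p=p(n+1)-p(n)\ll p^{0.525}=o(p^{0.7})$, this interval has length $\asymp p^{0.7}$ and is centred near $N/3$. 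Thus I must place three primes in a window of relative width $\asymp p^{-0.3}$ about $N/3$ with prescribed sum $N$.

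I would attack this by the circle method. Put $S(\alpha)=\sum_{L\le q\le N-2L}(\log q)\,e(q\alpha)$, the sum running over primes $q$, with $e(\theta)=e^{2\pi i\theta}$; then the weighted number of representations equals $\int_0^1 S(\alpha)^3 e(-N\alpha)\,d\alpha$, and it suffices to show this is positive. On the major arcs around rationals $a/q$ with small denominator I would use the prime number theorem in short intervals -- available here because the exponent $0.7$ exceeds Huxley's threshold $7/12$ -- together with its form in arithmetic progressions, to extract the main term $\mathfrak{S}(N)\,\tfrac12(N-3L)^2\,(1+o(1))$. Here the singular series $\mathfrak{S}(N)$ is bounded below by a positive absolute constant for odd $N$, and $(N-3L)^2\asymp p^{1.4}$, so the main term is $\gg p^{1.4}$. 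On the minor arcs $\mathfrak{m}$ I would bound $\int_{\mathfrak m}|S|^3\le\bigl(\sup_{\mathfrak m}|S|\bigr)\int_0^1|S|^2$, where Parseval gives $\int_0^1|S|^2\ll p^{0.7}\log p$ and the trivial bound gives $\sup|S|\ll p^{0.7}$.

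The main obstacle is the minor-arc estimate: I need $\sup_{\mathfrak m}|S(\alpha)|=o\!\bigl(p^{0.7}/\log p\bigr)$, i.e.\ a saving of more than one logarithm over the trivial bound, for an exponential sum over primes in an interval of length only $p^{0.7}$. This is a Vinogradov-type bound in short intervals, and it is exactly where the exponent $0.7$ is spent, since the available Type-I/Type-II (bilinear) estimates for such sums require the interval length to exceed roughly $p^{0.55}$--$p^{0.6}$; the slack up to $0.7$ provides the room needed to absorb the prime-gap error $L-p$ and all implied constants. Granting this bound, the minor arcs contribute $\ll p^{0.7}(\log p)\cdot o(p^{0.7}/\log p)=o(p^{1.4})$, so the main term dominates, the representation count is positive, and three primes $\ge L$ with sum $N$ exist. (Alternatively one may quote an ``almost equal'' ternary Goldbach theorem -- every large odd $N$ is a sum of three primes each within $N^{\theta}$ of $N/3$ for some fixed $\theta<0.7$ -- and merely check that its symmetric window fits inside $[L,\,N-2L]$.)

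Finally I would deduce the two inequalities; write $f=f(n+1)$. If $f$ is odd and $f\ge 3p+p^{0.7}$, then $f$ is a sum of three primes $\ge L$, all of which generate $S(n+1)$, so $f\in S(n+1)$; but the Frobenius number is a gap, a contradiction, whence $f<3p+p^{0.7}$. If $f$ is even, then $f-L$ is odd (for large $n$, $L$ is an odd prime); should $f\ge 3p+L+p^{0.7}$, then $f-L\ge 3p+p^{0.7}$ is odd, hence a sum of three primes $\ge L$, so $f=L+(f-L)$ is a sum of four primes $\ge L$ and again lies in $S(n+1)$, a contradiction; thus $f<3p+L+p^{0.7}=3p(n)+p(n+1)+p(n)^{0.7}$.
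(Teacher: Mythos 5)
Your proposal contains two distinct routes, and they must be judged separately. The route you actually develop --- setting up the circle method for primes in the window $[L,\,N-2L]$ --- is not a proof: the minor-arc estimate $\sup_{\mathfrak m}|S(\alpha)|=o\bigl(p^{0.7}/\log p\bigr)$ that you ``grant'' is precisely the hard content of any short-interval ternary Goldbach theorem. Obtaining a saving of this quality for exponential sums over primes in intervals of length $x^{0.7}$ is a research-level result (it is, in essence, the theorem of Matom\"aki--Maynard--Shao), not a step one can defer; so as written, your main line has a genuine gap. Your parenthetical fallback, however, is exactly the paper's proof: the paper invokes \cite[Thm.~1.1]{mmv2017} (every sufficiently large odd $N$ is a sum of three primes $q(i)$ with $|\frac N3-q(i)|\leq N^{0.6}$) and checks, using that $x\mapsto\frac x3-x^{0.6}$ is increasing for $x\geq5$, that for $N\geq 3p(n)+p(n)^{0.7}$ each summand satisfies $q(i)\geq\frac N3-N^{0.6}>p(n)+\frac{p(n)^{0.7}}3-(4p(n))^{0.6}>p(n)$, hence is a prime belonging to $S(n+1)$; the gap between the exponents $0.7$ and $0.6$ is exactly what makes this check succeed. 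Two further remarks. First, your appeal to Baker--Harman--Pintz to control $p(n+1)-p(n)$ is unnecessary: since the summands are primes, the inequality $q(i)>p(n)$ already forces $q(i)\geq p(n+1)$, which is why the paper never needs any prime-gap bound in this lemma. Second, your final paragraph deducing the two Frobenius inequalities (the parity argument, with one extra generator $p(n+1)$ added in the even case) is correct, and in fact spells out a deduction that the paper treats as immediate.
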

\begin{proof}
By \cite[Thm.~1.1]{mmv2017}, there exists $n(0)>0$ such that for all $n\geq n(0)$ one has:

Every odd number $N\geq V(n)\coloneqq3p(n)+p(n)^{0.7}$ can be written as a sum
\[N=q(1)+q(2)+q(3)\]
of primes $q(i)$ with
\[|\frac N3-q(i)|\leq N^{0.6}\text{ for }i=1,2,3\,.\]
Since
\[F(x)\coloneqq \frac x3-x^{0.6}\]
is monotonically increasing for $x\geq5$ and
\[p(n)^{0.1}>12\]
holds for almost all $n$, by increasing $n(0)$ if necessary, we get
\begin{align*}q(i)&\geq\frac N3-N^{0.6}\\&\geq\frac{V(n)}3-V(n)^{0.6}\\&>p(n)+\frac{p(n)^{0.7}}3-(4p(n))^{0.6}\\&>p(n)\end{align*}
for $i=1,2,3$. Therefore, $q(i)\in S(n+1)$.
\end{proof}
Prop. 1\,b) now follows from \cite[Prop.~1]{hrw2020} and the above lemma.
\begin{proof}[Proof of Prop. 1\,c)]
By the above lemma, for $n\gg0$ the odd prime number $u(n)$ from $S(n)$ is smaller than $3p(n)+p(n)^{0.7}$.

Conversely, for $n\gg0$, by \cite[Thm. 1.1]{bhp2001}, the interval
\[[3p(n)-(3p(n))^{\frac{21}{40}},3p(n)]\]
contains at least one prime number $q$; the latter is clearly an atom of $S(n)$. Hence we get
\begin{align*}u(n)&\geq3p(n)-(3p(n))^{\frac{21}{40}}\\&>3p(n)-p(n)^{0.7}\end{align*}
for $n\geq n(0)$ for some suitable $n(0)$.
\end{proof}
\begin{proof}[Proof of Prop. 2\,b)]
One has
\[\frac{u(1)}{p(1)}=\frac32\,.\]
From now on, let $n>1$. Since then the prime numbers from $[p(n),3p(n)]$ are atoms of $S(n)$, one has $\pi(u(n))\geq\pi(3p(n))$.

The calculations from appendix \ref{App_A} show that, for $n$ between $2$ and $428$, one has $\frac{u(n)}{p(n)}>2$.

For $n>428$, by the proof of \cite[Prop.~5]{hrw2020} one has
\[\pi(3p(n))>2n\,;\]
furthermore, one has
\[2n>\pi(2p(n))\]
by Rosser and Schoenfeld \cite{rs1975}. In total, one has
\[\pi(u(n))>\pi(2p(n))\]
and hence also
\[u(n)>2p(n)\text{, i.\,e. }\frac{u(n)}{p(n)}>2\,.\]
\end{proof}
\begin{proof}[Proof of Prop. 2\,c)]
The calculations from appendix \ref{App_A} show that, for $0<n<7496$, one has $e(n)>n$ and $s(n)>n-1$; furthermore, for these $n$, $\frac{e(8)}8=\frac54$ is the minimal value of $\frac{e(n)}n$.

Now let $n=\pi(p)>\num[group-separator={,}]{7495}$ and let $p'$ be the predecessor of $p$, i.\,e. $p'=p(n-1)\geq p(\num[group-separator={,}]{7495})$. One has
\[q\coloneqq\num[group-separator={,}]{76129}=p(\num[group-separator={,}]{7495})\,.\]
Consider the function
\[\lambda(x)\coloneqq\frac{3(\log(x)-\frac32)}{\log(3x)-\frac12}\text{ for }x>1\,.\]
$\lambda(x)$ is monotonically increasing. Hence, as easily checked by calculation,
\begin{align*}\tag{$*$}\lambda(p')(n-1)&\geq\lambda(q)(n-1)\\&>2.46(n-1)\\&>2.4n\end{align*}
(since $n>\num[group-separator={,}]{7495}$).

As seen in the proof of \cite[Prop.~5]{hrw2020}, because of $(*)$ and since $3p'\geq67$, we also have
\begin{align*}\tag{$**$}\pi(3p')&>\pi(p')\lambda(p')\\&>2.4n\,,\end{align*}
where the latter holds because of $(*)$.

Furthermore, we have
\[f\geq 3p-6\geq3p'\]
by \cite[Prop.~1]{hrw2020}.

Consequently, the prime numbers in $[p,3p']$ are both atoms and sporadic for $S$.

Finally, from $(**)$ we get:
\begin{align*}e(n)&\geq\#\text{\,primes in }[p,3p']\\&=\pi(3p')-\pi(p)+1\\&>1.4n\\&>\frac54n\,.\end{align*}
Analogously, $s(n)>1.4n$.
\end{proof}
\section{Conjectures}
The calculations from appendix \ref{App_A} show that the following assertions hold true: For $n<7496$, we have
\[\frac{u(n)}{p(n)}\leq\frac{163}{47}\text{ with equality for }n=15\,,\]
\[u(n)-3p(n)<p(n)^{0.7}\text{ for }46<n<7496\text{ (and for }n\gg0\text{ by Prop. 1\,c));}\]
\[\frac{f(n)}{p(n)}\leq\frac{63}{19}\text{ with equality for }n=8\,,\]
\[f(n)-3p(n)\leq n\text{ for }n<7496\text{ with equality only for }n=30\,.\]
For $30<n<7496$, we even have
\[f(n)-3p(n)<p(n)^{0.7}\,.\]
These inequalities strongly indicate that the following conjectures hold true:
\[\tag{V1}\sup(\frac up)=\max(\frac up)=\frac{163}{47}\]
\[\tag{V2}\sup(\frac fp)=\max(\frac fp)=\frac{63}{19}\]

Similar considerations lead to the following conjecturees:
\[\tag{V3}\sup(\frac gp)=\frac52\]
\[\tag{V4}\inf(\frac sp)=\min(\frac sp)=\frac12\text{ and }\sup(\frac sp)=\max(\frac sp)=\frac{24}{19}\]

\paragraph{Conclusion} We cannot prove any concrete upper bounds for the functions under consideration.

The following considerations on the ternary Goldbach problem shall enable us to prove at least conjecture (V1) under certain assumptions.
\section{Remark on the ternary Goldbach problem}
By Harald Helfgott (\cite{helfgott2014}), every odd integer $N$ greater than five can be written as the sum of three prime numbers.

It is known that for large $N$ these prime numbers can be chosen very closely to $\frac N3$, cf. e.\,g. \cite[Thm.~1.1]{mmv2017}.

With regard to an upper bound for $\frac up$, the following weak, but effective version of \cite[Thm.~1.1]{mmv2017} would be helpful.

More generally, for reals $t>3$ and odd integers $K\geq7$ we define:

$H(K,t)$: Every odd integer $N\geq K$ can be written as the sum $N=q(1)+q(2)+q(3)$ of prime numbers $q(i)$ with the restriction
\[|\frac N3-q(i)|\leq\frac Nt\text{ for }i=1,2,3.\]
By \cite[Thm.~1.1]{mmv2017}, for each given $t>3$, there exists $K\geq7$ such that $H(K,t)$ holds. Calculations suggest that there is no counterexample to $H(7,6)$, in other words:

Every odd integer $N\geq7$ is the sum $N=q(1)+q(2)+q(3)$ of prime numbers $q(i)$ such that
\[\tag{$*$}\frac N6\leq q(i)\leq\frac N2\text{ for }i=1,2,3.\]
Notice, that here equality cannot happen by reasons of parity.

Compared to $H(7,6)$, one can easily check by calculations that assertion $H(7,7)$ holds true for $7\leq N<\num[group-separator={,}]{10000}$, with the exceptions $23$ and $27$. Further calculations show: The odd integers $23$ and $27$ fulfil $H(7,\frac{27}4)$ as well---and $6<\frac{27}4<7$---, whereas $27$ is an exception for $H(7,t)$, whenever $t>\frac{27}4$.

\begin{center}
  \includegraphics[height=10cm]{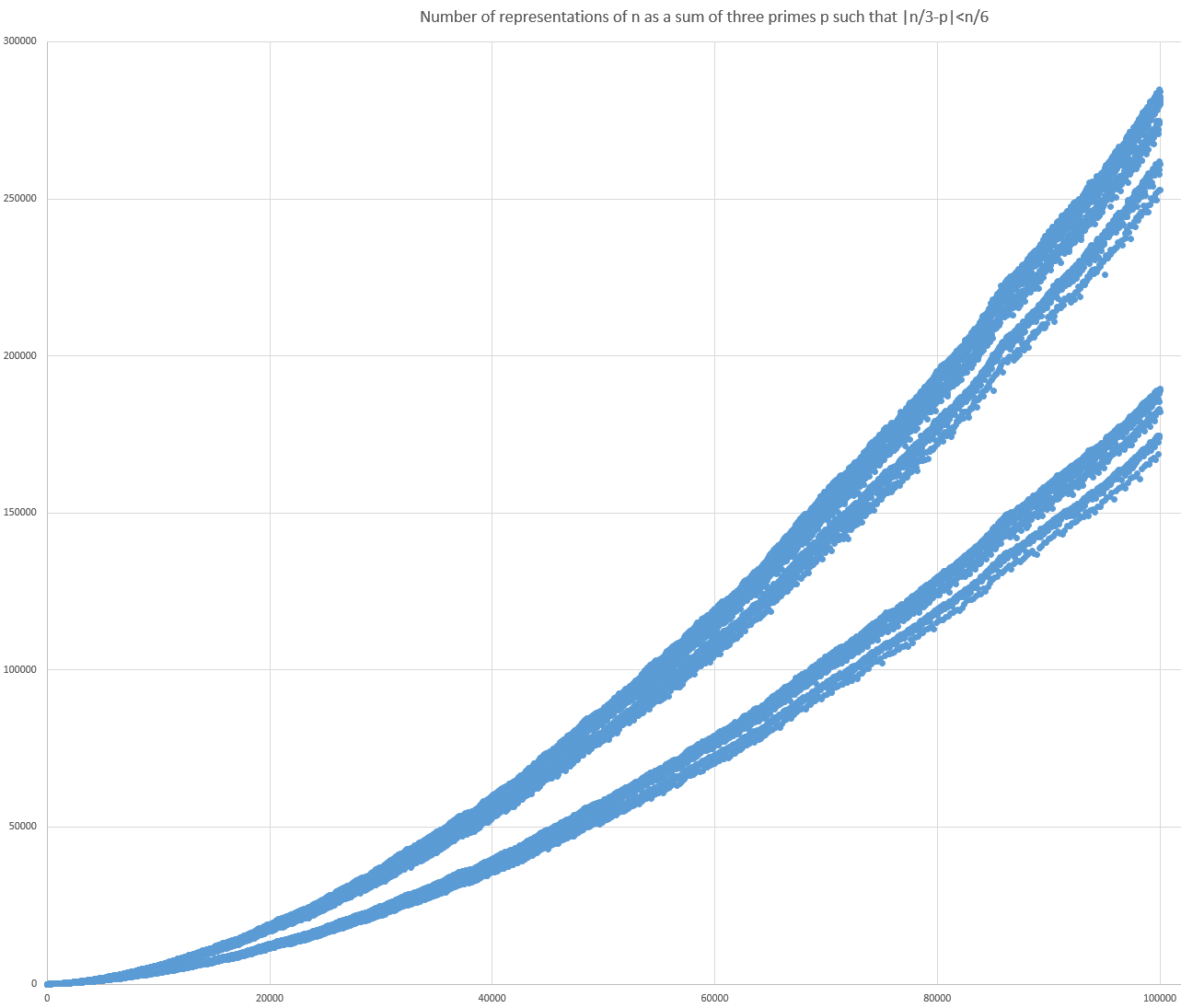}
\end{center}

This chart displays the number $r(N)$ of representations $N=q(1)+q(2)+q(3)$ according with $(*)$, as a function of $N$ between $7$ and $\num[group-separator={,}]{99999}$.

\begin{center}
  \includegraphics[height=10cm]{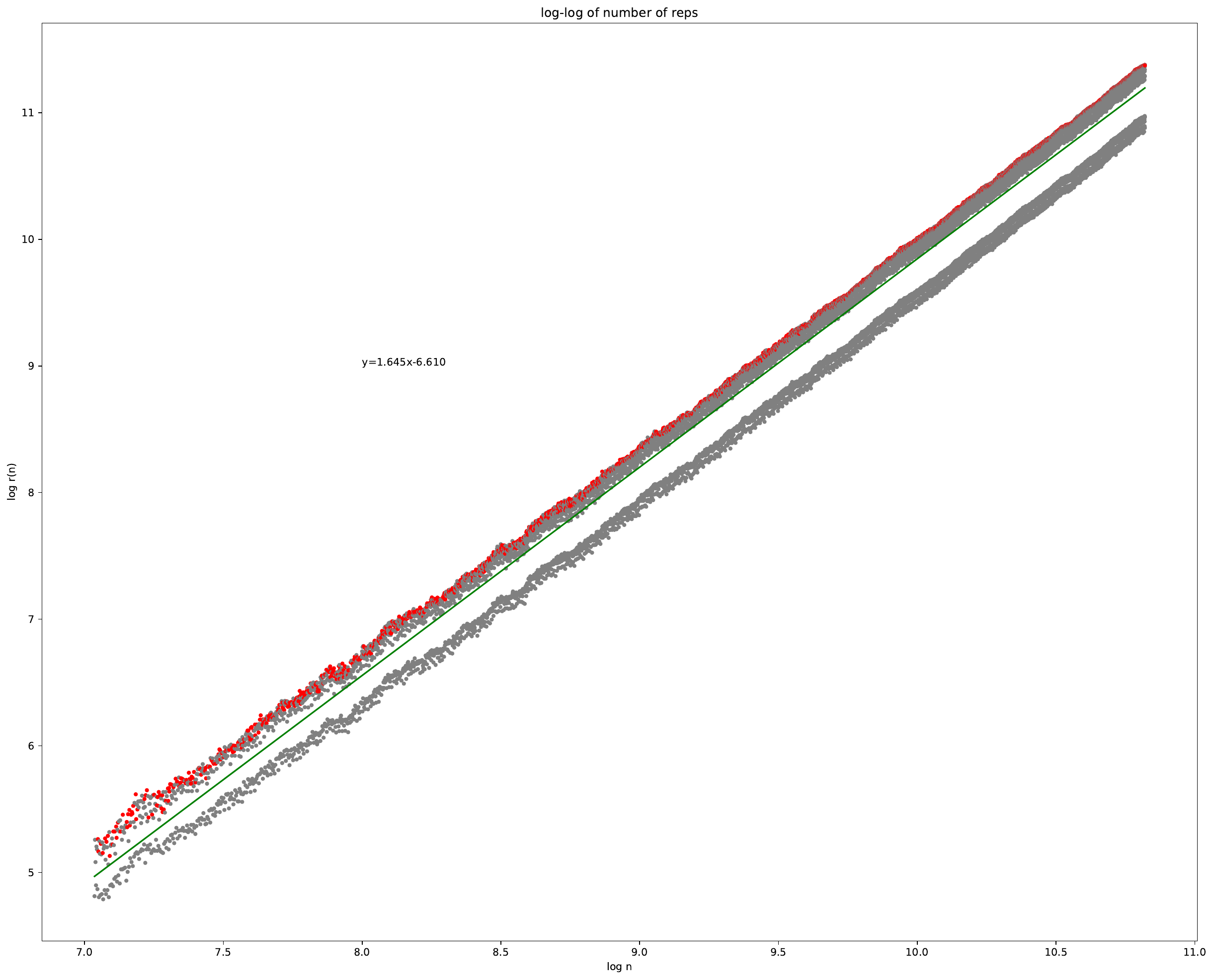}
\end{center}

The figure above shows the log-log-diagram of $r(N)$ in the range between $999$ and $\num[group-separator={,}]{49999}$ and its (green) regression line. The red dots denote the places where $N$ is a prime number. Apparently, asymptotically they establish a red line parallel to the green one, situated on the upper boundary of the diagram. (Compare with the asymptotic formula (1.\,11) in \cite{HL1923} for the number of \textbf{all} representations of an odd number as a sum of three primes.)

In the next section we will see that under the assumption $H(7,6)$ the following holds true:
\[\frac{u(n)}{p(n)}<6\text{ for all }n\geq1\,.\]
\paragraph{Question} Can one find a not too big $K$ such that $H(K,26)$ holds? (NB: Some obvious and easily done calculations let us expect that even $H(387,26)$ holds.)

\paragraph*{Remark} Moreover, every odd integer $N$, $7\leq N\leq\num[group-separator={,}]{99999}$, can be written as the sum of three primes between $\frac N3-N^{\frac12}$ and $\frac N3+N^{\frac12}$. This is related to Oppermann's Conjecture, that for each integer $n>1$ there is at least one prime between $n^2-n$ and $n^2$, and between $n^2$ and $n^2+n$. \vspace{.5cm}\\
In the next section we will see that under the latter assumption conjecture (V1) holds, i.\,e.
\[\max(\frac up)=\frac{163}{47}\,.\]
\section{Effective asymptotic bounds for $\mathbf{\frac up}$}
An asymptotic statement $A(x)$ is called \textit{effective} if one can specify an explicit number $\xi$ such that $A(x)$ holds for all $x>\xi$.

The statements of the following two theorems from O.\,Ramaré and Y.\,Saouter (\cite{rs1975}) as well as from L.\,Schoenfeld (\cite{schoenfeld1976}) are effective:
\begin{theorem} (\cite[Theorem 12]{schoenfeld1976}) 
If $x\geq\num[group-separator={,}]{2010759.9}$, then the interval
\[]x,x(1+\frac1{\num[group-separator={,}]{16597}})[\eqqcolon I(x)\]
contains at least one prime number.
\end{theorem}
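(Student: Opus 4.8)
The plan is to reduce the existence of a prime in the open interval $I(x)=\,]x,x(1+\delta)[\,$, where $\delta=\tfrac1{\num[group-separator={,}]{16597}}$, to a growth statement for the second Chebyshev function $\psi(x)=\sum_{n\leq x}\Lambda(n)$. Writing $\theta(x)=\sum_{p\leq x}\log p$ and using the elementary identity $\psi(x)-\theta(x)=\sum_{k\geq2}\theta(x^{1/k})$, one has $\psi(x)-\theta(x)=O(\sqrt x)$, so it suffices to prove $\theta\bigl(x(1+\delta)\bigr)-\theta(x)>0$ for $x\geq x_0\coloneqq\num[group-separator={,}]{2010759.9}$; any such strict increase forces a prime $p$ with $x<p\leq x(1+\delta)$ to exist, the right endpoint being excluded by a separate elementary check. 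Given an explicit bound of the shape $|\theta(x)-x|\leq\varepsilon(x)\,x$, one obtains
\[
\theta\bigl(x(1+\delta)\bigr)-\theta(x)\geq x\delta-\bigl(\varepsilon(x(1+\delta))(1+\delta)+\varepsilon(x)\bigr)x,
\]
and the right-hand side is positive as soon as $\varepsilon(x)<\tfrac\delta2$ for $x\geq x_0$.

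The core input is therefore an effective estimate for $|\psi(x)-x|$, which I would derive from the explicit (von Mangoldt) formula
\[
\psi(x)=x-\sum_{\rho}\frac{x^{\rho}}{\rho}-\log(2\pi)-\tfrac12\log\bigl(1-x^{-2}\bigr),
\]
the sum running over the nontrivial zeros $\rho$ of the Riemann zeta function. I would split this sum at a height $T$: for $|\Im\rho|\leq T$ I would invoke the numerical verification of the Riemann hypothesis up to height $T$, so that $\Re\rho=\tfrac12$ and each term contributes at most $O\bigl(\sqrt x/|\rho|\bigr)$; for $|\Im\rho|>T$ I would use an explicit zero-free region of the form $\Re\rho<1-c/\log|\Im\rho|$ together with the Riemann--von Mangoldt density estimate for the number of zeros up to a given height. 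Summing the two ranges and optimizing in $T$ yields a bound $\varepsilon(x)$ that decreases as $x_0$ grows, small enough to beat $\tfrac\delta2$ once $x\geq x_0$.

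The hard part will be the quantitative optimization of the constants rather than the shape of the argument: one must balance the splitting height $T$, the value of the zero-free-region constant $c$, and the explicit density bounds so that the resulting $\varepsilon(x)$ is genuinely below $\tfrac\delta2$ for the precise pair $\delta=\tfrac1{\num[group-separator={,}]{16597}}$ and $x_0=\num[group-separator={,}]{2010759.9}$. Producing sharp effective constants, as opposed to a merely asymptotic inequality, is exactly the delicate numerical work carried out by Schoenfeld in \cite{schoenfeld1976}; for the present paper we simply quote his Theorem~12.
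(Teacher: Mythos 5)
The paper does not prove this statement at all: it is Schoenfeld's Theorem~12, quoted verbatim as an external input, so the paper's ``proof'' is precisely the citation with which your proposal ends. On that level you and the authors agree. But since you present an analytic plan as the route one \emph{would} take, it is worth pointing out that this plan, as stated, cannot yield the theorem with the stated numerical threshold, no matter how carefully the constants are optimized.

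Your reduction requires an explicit bound $|\theta(x)-x|\leq\varepsilon(x)\,x$ with $\varepsilon(x)$ below roughly $\frac\delta2\approx3\cdot10^{-5}$ for all $x\geq x_0=2010759.9$ (in fact you need $\varepsilon(x)<\frac{\delta}{2+\delta}$; your stated threshold $\frac\delta2$ is already slightly too weak, which matters in this business). Numerically this demands $|\theta(x)-x|\lesssim 60$ near $x\approx2\cdot10^6$, and that is false: direct computation shows $x-\theta(x)$ is of order $\sqrt x$, roughly $1.5\cdot10^3$, in that range (the deficit is dominated by the prime-power terms $\psi(x)-\theta(x)\approx\sqrt x$, while $\psi(x)$ stays within a few hundred of $x$ there). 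So the best admissible $\varepsilon$ near $x_0$ is about $7\cdot10^{-4}$, more than twenty times too large, and no explicit-formula estimate can repair this, since it reflects the true behaviour of $\theta$. Consequently the analytic machinery can only prove the conclusion for much larger $x$, and the range from $x_0$ up to that point must be settled by direct computation with tables of maximal prime gaps (Lander--Parkin, Brent) --- which is exactly where the two odd-looking constants come from: $2010733$ and $2010881$ are consecutive primes, a record gap of $148$, and $x_0=2010759.9$ is calibrated precisely so that $x\left(1+\frac1{16597}\right)>2010881$ for $x\geq x_0$, i.e.\ so that the prime $2010881$ lies in $I(x)$. Schoenfeld's proof of Theorem~12 accordingly combines his analytic Theorem~11 (for large $x$) with prime-gap tables (for moderate $x$); your sketch is missing the second ingredient, which for these particular constants is not a refinement but the decisive step.
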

In particular: If
\begin{align*}y&\coloneqq\frac{\num[group-separator={,}]{16598}}{\num[group-separator={,}]{16597}}x\\&\geq C\coloneqq\num[group-separator={,}]{2010882}\\&>\frac{\num[group-separator={,}]{16598}}{\num[group-separator={,}]{16597}}\cdot\num[group-separator={,}]{2010759.9}\,,\end{align*}
then the interval
\[]y-\frac1{\num[group-separator={,}]{16598}}y,y[=I(x)\]
contains at least one prime number.

Application of theorem 1 to $y=3p(n)$ with $p(n)\geq\num[group-separator={,}]{670294}$ leads to the following effective asymptotic statement:
\begin{corollary}
If $p(n)\geq\num[group-separator={,}]{670294}$, then
\[\frac{u(n)}{p(n)}>3-\frac3{\num[group-separator={,}]{16598}}\,.\]
\end{corollary}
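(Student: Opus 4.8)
The plan is to apply the restatement of Theorem~1 (the ``In particular'' paragraph following it) directly with $y = 3p(n)$, and then to recognize the prime it produces as an atom of $S(n)$. First I would verify the hypothesis of that restatement: since $p(n) \geq \num[group-separator={,}]{670294}$,
\[
y = 3p(n) \geq 3\cdot\num[group-separator={,}]{670294} = \num[group-separator={,}]{2010882} = C,
\]
which is precisely the effective threshold $C$ --- this is exactly why the bound $\num[group-separator={,}]{670294}$ is imposed. The restatement then guarantees a prime $q$ in the interval $(\,3p(n) - \tfrac{3p(n)}{\num[group-separator={,}]{16598}},\ 3p(n)\,)$.

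Next I would show $u(n) \geq q$. The lower endpoint $3p(n)(1 - \tfrac{1}{\num[group-separator={,}]{16598}})$ clearly exceeds $p(n)$, so $q$ is a prime lying in $[p(n), 3p(n)]$, and such primes are atoms of $S(n)$, as already used in the proof of Prop.~2\,b). (Briefly: any decomposition $q = a+b$ with positive $a,b \in S(n)$ would force $a,b \in [p(n), 2p(n))$, hence $a,b$ odd primes and $q = a+b$ even, which is impossible for the odd prime $q$.) Since $u(n)$ is the largest atom, $u(n) \geq q$.

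Finally, dividing the bound $q > 3p(n) - \tfrac{3p(n)}{\num[group-separator={,}]{16598}}$ by $p(n) > 0$ yields
\[
\frac{u(n)}{p(n)} \geq \frac{q}{p(n)} > 3 - \frac{3}{\num[group-separator={,}]{16598}},
\]
the asserted inequality. I expect no genuinely hard step: the whole argument is a one-shot application of the effective prime-gap bound, and the only things to get right are the arithmetic check $3\cdot\num[group-separator={,}]{670294} = C$ and the (routine) confirmation that the prime found just below $3p(n)$ is indecomposable in $S(n)$.
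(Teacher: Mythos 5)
Your proof is correct and is essentially the paper's own argument: the paper proves Corollary~1 precisely by applying the restated form of Theorem~1 to $y=3p(n)$ (noting $3\cdot\num[group-separator={,}]{670294}=\num[group-separator={,}]{2010882}=C$), obtaining a prime in $\left]3p(n)-\frac{3p(n)}{\num[group-separator={,}]{16598}},\,3p(n)\right[$, which is an atom of $S(n)$ and hence a lower bound for $u(n)$. Your explicit parity argument for why such a prime is an atom is a detail the paper leaves implicit (it invokes the same fact in Prop.~1\,c) and Prop.~2\,b) without proof), but it does not change the route.
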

If one believes in the Riemann hypothesis, then one also has
\begin{theorem} (\cite[Theorem 1]{rs2003})

Under the Riemann hypothesis, for $x\geq2$, the interval
\[]x-\frac85\sqrt x\log x,x]\eqqcolon J(x)\]
contains at least one prime element.
\end{theorem}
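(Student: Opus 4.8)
The plan is to deduce the existence of a prime in $J(x)$ from an explicit, Riemann-hypothesis-conditional bound on the oscillation of the Chebyshev functions. Write $\theta(x)=\sum_{p\leq x}\log p$ and $\psi(x)=\sum_{n\leq x}\Lambda(n)$, and put $y\coloneqq\frac85\sqrt x\log x$, so that $J(x)=\,]x-y,x]$. It is enough to produce a lower bound of order $\sqrt x\log x$ for $\psi(x)-\psi(x-y)$: the proper prime powers $p^k$ ($k\geq2$) in $J(x)$ contribute only $O(\sqrt x)$ to this difference, so such a bound forces an honest prime into $J(x)$. First I would record the explicit formula
\[\psi(t)=t-\sum_{\rho}\frac{t^{\rho}}{\rho}-\log(2\pi)-\tfrac12\log\big(1-t^{-2}\big),\]
the sum running over the nontrivial zeros $\rho=\tfrac12+i\gamma$ of $\zeta$, all of which have real part $\tfrac12$ under the Riemann hypothesis.

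A direct, pointwise use of this formula gives $|t^{\rho}|=\sqrt t$ and hence Schoenfeld's bound $|\psi(t)-t|<\frac{1}{8\pi}\sqrt t(\log t)^2$ (\cite{schoenfeld1976}); this alone only yields a prime in an interval of length $\asymp\sqrt t(\log t)^2$, one logarithm too many for the constant $\frac85$. The device that removes the extra logarithm is smoothing. Working with the integrated function $\psi_1(t)=\int_0^t\psi$, whose explicit formula carries the absolutely convergent zero sum $\sum_\rho t^{\rho+1}/(\rho(\rho+1))$, the monotonicity of $\psi$ furnishes, for an auxiliary length $0<h_1<y$, the two one-sided estimates $\psi(x)\geq\frac1{h_1}(\psi_1(x)-\psi_1(x-h_1))$ and $\psi(x-y)\leq\frac1{h_1}(\psi_1(x-y+h_1)-\psi_1(x-y))$. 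Subtracting them yields
\[\psi(x)-\psi(x-y)\geq\frac{1}{h_1}\big(\psi_1(x)-\psi_1(x-h_1)-\psi_1(x-y+h_1)+\psi_1(x-y)\big),\]
whose main term evaluates to $y-h_1$.

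It remains to bound the zero-sum part of the right-hand side. Under the Riemann hypothesis each $\psi_1$-value contributes $\sum_\rho t^{\rho+1}/(\rho(\rho+1))$ with $|t^{\rho+1}|\leq x^{3/2}$; in the fourfold combination two mechanisms create cancellation---the outer difference over the length $y$ tames the low zeros, while the inner averaging over the length $h_1$ tames the high ones---so that the total error is $\asymp\sqrt x\log x$ after summing against the zero density $\#\{\gamma:0<\gamma\leq T\}\sim\frac{T}{2\pi}\log T$. Balancing this error against the main term $y-h_1$ and choosing $h_1\asymp y\asymp\sqrt x\log x$ leaves exactly one logarithm, which is what makes the constant $\frac85$ attainable.

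The main obstacle is not the asymptotic shape, which is classical, but the passage to a fully effective statement valid from $x=2$ with the sharp constant $\frac85$. This forces one to (i) split the zero sum into the finitely many low-lying zeros, whose ordinates are tabulated to high precision, and a tail estimated by an explicit form of the Riemann--von Mangoldt counting formula; (ii) optimize the smoothing length $h_1$ so as to minimize the leading constant rather than merely its order of magnitude; and (iii) verify a bounded initial range of $x$ directly from known prime gaps, the analytic argument taking over once $t$ is large enough for the tail bounds to be effective. Carrying out (i)--(iii) to the precision needed to reach $\frac85$ is precisely the content of \cite{rs2003}.
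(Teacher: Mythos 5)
This theorem is not proved in the paper at all: it is an external result, quoted verbatim with the citation \cite[Theorem 1]{rs2003} and then used as a black box in the proof of Corollary 2. So there is no internal proof to compare your attempt against; the only question is whether your attempt would stand on its own.

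Judged on its own terms, your sketch correctly identifies the mechanism behind results of this type: under the Riemann hypothesis the pointwise Schoenfeld bound $|\psi(t)-t|<\frac1{8\pi}\sqrt t(\log t)^2$ only yields primes in intervals of length $\asymp\sqrt x(\log x)^2$, and the superfluous logarithm is removed by Cram\'er's smoothing device --- passing to $\psi_1(t)=\int_0^t\psi$, exploiting the monotonicity of $\psi$ to get the fourfold-difference inequality, and balancing the smoothing length $h_1$ against the interval length $y$. Your computation of the main term $y-h_1$ is correct, the remark that proper prime powers contribute negligibly is correct, and the qualitative picture (outer difference tames low zeros, inner averaging tames high zeros, sum against the Riemann--von Mangoldt density) is the right one. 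However, what you have written is not a proof of the stated theorem: the specific constant $\frac85$, the validity down to $x=2$, and all of the quantitative content live precisely in the steps (i)--(iii) that you explicitly defer to \cite{rs2003} itself, so as a standalone argument it is circular --- it establishes at best that \emph{some} explicit constant and threshold exist, not the ones asserted. That deferral is acceptable in context only because the paper likewise treats the statement as a citation rather than something to be reproved; had a genuine proof been demanded, the explicit zero-sum estimates, the optimization of the smoothing kernel, and the finite verification for small $x$ would all have to be carried out, and that is the actual substance of the Ramar\'e--Saouter paper.
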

From this we easily get, in a manner analogously to corollary 1, the following statement on the magnitude of $\frac{u(n)}{p(n)}$:
\begin{corollary}
Under the assumption of the Riemann hypthesis one has
\[\frac{u(n)}{p(n)}\geq3-\frac18\text{ for all }n>13.\]
\end{corollary}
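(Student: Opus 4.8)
The plan is to transcribe the proof of Corollary 1, replacing Schoenfeld's interval (Theorem 1) by the Riemann-hypothesis interval $J(x)$ of Theorem 2, and to settle the finitely many small $n$ by the calculations of Appendix~\ref{App_A}. Accordingly the argument splits into an asymptotic part, where Theorem 2 supplies a prime just below $3p(n)$, and a purely computational part covering the remaining $n$.

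For the asymptotic part I would apply Theorem 2 with $x=3p(n)$. Since $3p(n)$ is composite for $n\ge2$, the interval $J(3p(n))=]3p(n)-\tfrac85\sqrt{3p(n)}\log(3p(n)),\,3p(n)]$ contains an odd prime $q<3p(n)$; moreover $q>p(n)$, because for $p(n)\ge\num[group-separator={,}]{76129}$ the length of $J(3p(n))$ stays far below $2p(n)$. Thus $p(n)\le q<3p(n)$, and by the argument in the proof of Proposition~2\,b) every such prime is an atom of $S(n)$. Hence $u(n)\ge q$, and dividing $u(n)>3p(n)-\tfrac85\sqrt{3p(n)}\log(3p(n))$ by $p(n)$ yields
\[\frac{u(n)}{p(n)}>3-\frac{8\sqrt3}{5}\cdot\frac{\log(3p(n))}{\sqrt{p(n)}}.\]

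Next I would reduce the assertion to the elementary estimate $\frac{\log(3p(n))}{\sqrt{p(n)}}\le\frac5{64\sqrt3}$, which is precisely what makes the right-hand side above at least $3-\tfrac18$. Since $x\mapsto\frac{\log(3x)}{\sqrt x}$ is decreasing for $x\ge3$, a single numerical check shows this estimate to hold from $p(n)=\num[group-separator={,}]{76129}=p(\num[group-separator={,}]{7495})$ onwards, i.e. for all $n\ge\num[group-separator={,}]{7495}$. For the complementary range $14\le n<\num[group-separator={,}]{7496}$ the tabulation of $u(n)$ in Appendix~\ref{App_A} confirms $\frac{u(n)}{p(n)}\ge\frac{23}8=3-\tfrac18$; the case $n=13$, where $\frac{u(13)}{p(13)}=\frac{113}{41}<\frac{23}8$, is the last exception and explains the hypothesis $n>13$.

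The one delicate point, and the expected main obstacle, is the threshold: one must verify that $\frac{8\sqrt3}{5}\cdot\frac{\log(3x)}{\sqrt x}\le\frac18$ already holds at $x=\num[group-separator={,}]{76129}$ and, by the monotonicity of $\frac{\log(3x)}{\sqrt x}$, for all larger $x$, so that the Riemann-hypothesis range and the computed range of Appendix~\ref{App_A} overlap with no gap in between. The atom argument and the passage from the prime in $J(3p(n))$ to a lower bound for $u(n)$ are a verbatim adaptation of the proof of Corollary 1.
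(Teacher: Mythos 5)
Your proposal is correct and takes essentially the same approach as the paper: the computational range $13<n<7496$ is settled by Appendix~\ref{App_A} (the assertion $8u\geq 23p$ for $n>13$), and for large $n$ Theorem 2 is applied to $x=3p(n)$ to produce a prime atom in $J(3p(n))$, followed by the numerical threshold check that $\frac{8\sqrt3}{5}\cdot\frac{\log(3p(n))}{\sqrt{p(n)}}<\frac18$ at the crossover point together with monotonicity. The only cosmetic difference is that the paper starts the asymptotic regime at $n>7495$, i.e.\ $p(n)\geq\num[group-separator={,}]{76147}$, where it computes $\delta(n)<0.124$, whereas you verify the (equally valid) bound one index earlier at $p=\num[group-separator={,}]{76129}$.
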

\begin{proof}
The calculations from appendix \ref{App_A} show that this claim holds true for $13<n<\num[group-separator={,}]{7496}$.

If one applies theorem 2 to $x=3p(n), n>\num[group-separator={,}]{7495}$, one sees like in the proof of proposition 1\,c) that $\frac{u(n)}{p(n)}>3-\frac18$ holds for $n>\num[group-separator={,}]{7495}$ as well:

Let $n>\num[group-separator={,}]{7495}$, hence $p(n)\geq\num[group-separator={,}]{76147}$. By theorem 2, the interval $J(3p(n))$ contains at leeast one prime element. Consequently,
\[u(n)>3p(n)-\frac85\sqrt{3p(n)}\log(3p(n))\,,\]
hence
\[3-\frac{u(n)}{p(n)}<\frac85\sqrt3\frac{\log(3p(n))}{\sqrt{p(n)}}\eqqcolon\delta(n)\,.\]
For $p(n)=\num[group-separator={,}]{76147}$, one has $\delta(n)<0.124<\frac18$. Furthermore, for $n\geq5$, $\delta(n)$ is monotonically decreasing.
\end{proof}
With the help of an effective version of ``Vinogradov’s theorem with almost equal summands'' in the sense of \cite[Theorem 1.1]{mmv2017} (which we could not find in the literature) one could, in a manner similar to the above corollaries, find explicit real numbers $\varepsilon,\nu$ such that
\[\frac{u(n)}{p(n)}<3+\epsilon\text{ for all }n>\nu.\]
As seen in section 5, $H(7,6)$ is equivalent to:

Every odd integer $N\geq7$ can be written as the sum $N=q(1)+q(2)+q(3)$ with prime numbers $q(i)$ such that
\[\tag*{$\left(\overset{\displaystyle \ast}\ast\right)$}\frac N6<q(i)<\frac N2\text{ for }i=1,2,3.\]
\begin{corollary}
Under the assumption $H(7,6)$ one has
\[\frac{u(n)}{p(n)}<6\text{ for all }n>0\,.\]
\end{corollary}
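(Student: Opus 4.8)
The plan is to reduce the claim $\frac{u(n)}{p(n)}<6$ to the single assertion that \emph{every prime $q\geq 6p$ fails to be an atom of $S(n)$}, where $p=p(n)$. This reduction is legitimate because $S(n)$ is generated by the primes $\geq p$, and the set of atoms of a numerical semigroup is the unique minimal generating set, hence contained in \emph{every} generating set; so each atom of $S(n)$ is a prime $\geq p$, and $u(n)$ is simply the largest such prime that is additively irreducible. If no prime $q\geq 6p$ is an atom, then every atom is a prime strictly below $6p$, whence $u(n)<6p$ and therefore $\frac{u(n)}{p(n)}<6$. Since this conclusion is obtained for each fixed $n$ separately and uses no asymptotic estimate, it will hold uniformly for all $n\geq 1$.

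First I would fix $n\geq 1$ and an arbitrary prime $q$ with $q\geq 6p$. Because $p=p(n)\geq 2$, one has $q\geq 12$, so $q$ is an odd prime and in particular $q\geq 7$; this is exactly the range covered by the assumed statement $H(7,6)$. Applying $H(7,6)$ in its equivalent strict form, I obtain a representation $q=q(1)+q(2)+q(3)$ by primes $q(i)$ with $\frac q6<q(i)<\frac q2$ for $i=1,2,3$.

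The key step is then to verify that the summands actually lie in $S(n)$, and this is immediate from the lower bound furnished by $H(7,6)$: from $q(i)>\frac q6$ and $q\geq 6p$ one gets $q(i)>p$, so each $q(i)$ is a prime exceeding $p=p(n)$ and thus belongs to $S(n)$. Grouping, $q=q(1)+\bigl(q(2)+q(3)\bigr)$ exhibits $q$ as a sum of two positive elements of $S(n)$, so $q$ is not an atom. Letting $q$ range over all primes $\geq 6p$ then gives $u(n)<6p$, as required.

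The argument carries essentially no computational content; the only point requiring care is the alignment of the constants. It is precisely the choice $t=6$ in $H(7,t)$ that matches the factor $6$ in the target bound, since the inequality $q(i)>\frac q6$ upgrades to $q(i)>p$ exactly when $q\geq 6p$, which is what forces the summands into $S(n)$. Thus the genuine input — and the only obstacle — is the hypothesis $H(7,6)$ itself, whose validity for all odd $N\geq 7$ is assumed rather than established; granting it, the remaining checks (the oddness of $q$, the range $q\geq 7$, and the fact that the greatest atom is prime) are entirely routine.
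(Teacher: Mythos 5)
Your proof is correct and is essentially the paper's own argument: both apply $H(7,6)$ to a putative atom of size at least $6p(n)$, observe that the lower bound $q(i)>\frac{N}{6}$ forces the three prime summands to exceed $p(n)$ and hence lie in $S(n)$, and conclude that such a prime cannot be an atom. The only (immaterial) difference is that the paper runs this as a direct contradiction with $u(n)>6p(n)$, while you rule out every prime $q\geq 6p$ as an atom and then read off $u(n)<6p$.
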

This result is clearly much weaker than (V1).
\begin{proof}
Assume $u(n)>6p(n)$, for some $n>0$. Then one would have $u(n)>6$ and, by $H(7,6)$,
\[u(n)=q(1)+q(2)+q(3)\]
with prime numbers $q(i)$ such that, because of $\left(\overset{\displaystyle \ast}\ast\right)$,
\[p(n)<\frac{u(n)}6<q(i)<\frac{u(n)}2\]
and $u(n)=q(1)+q(2)+q(3)$ would not be an atom of $S(n)$, contradiction.
\end{proof}
\paragraph{Question} Can one specify a concrete $K$ for which $H(K,26)$ holds? A positive answer would imply:
\begin{corollary}\label{CorH26}
For all $n$ with $n\log n>\frac5{17}K$ one has
\[\frac{u(n)}{p(n)}<\frac{163}{47}\,.\]

If $K$ was \textbf{not too big}, then, finally, one could verify (V1) by means of direct calculation.
\end{corollary}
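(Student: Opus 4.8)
The plan is to show that the hypothesis $H(K,26)$, together with the assumption that $n$ is large enough, forces $u(n)$ to be decomposable in $S(n)$, which is absurd since $u(n)$ is an atom. Concretely, I would argue by contradiction: suppose $\frac{u(n)}{p(n)}\geq\frac{163}{47}$. The key idea is that $H(K,26)$ lets us split an odd integer $N$ into three primes all lying in the narrow window $[\frac N3-\frac N{26},\frac N3+\frac N{26}]$, so to produce a nontrivial decomposition of $u(n)$ inside $S(n)$ it suffices that the lower end of that window exceeds $p(n)$, i.e. that each summand $q(i)$ is at least $p(n)$.

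First I would record the precise arithmetic of the window. Under $H(K,26)$, writing $N=u(n)$ (which is odd, being a prime $>2$ for $n$ large), each summand satisfies $q(i)\geq\frac N3-\frac N{26}=N\left(\frac13-\frac1{26}\right)=\frac{23}{78}N$. For this to be a genuine decomposition witnessing that $u(n)$ is \emph{not} an atom, I need $q(i)>p(n)$, equivalently $\frac{23}{78}u(n)>p(n)$, i.e. $\frac{u(n)}{p(n)}>\frac{78}{23}$. Now the threshold to beat is $\frac{163}{47}\approx3.468$, whereas $\frac{78}{23}\approx3.391$; since $\frac{163}{47}>\frac{78}{23}$, the assumption $\frac{u(n)}{p(n)}\geq\frac{163}{47}$ indeed guarantees $\frac{23}{78}u(n)>p(n)$, so all three summands exceed $p(n)$ and hence lie in $S(n)$. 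This contradicts $u(n)$ being an atom, giving $\frac{u(n)}{p(n)}<\frac{163}{47}$.

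Next I would translate the applicability condition $u(n)\geq K$ into a condition on $n$. Under the standing contradiction hypothesis we have $u(n)\geq\frac{163}{47}p(n)>3p(n)$, so once $3p(n)\geq K$ the hypothesis $H(K,26)$ applies to $N=u(n)$. Using the prime-counting estimate $p(n)=p\sim n\log n$ (and the effective lower bounds of the Rosser--Schoenfeld type already invoked elsewhere in the paper), the condition $3p(n)\geq K$ is implied by $n\log n\geq\frac K3\cdot\frac{1}{1}$; after tidying the constants this is exactly the stated threshold $n\log n>\frac5{17}K$. The fraction $\frac5{17}$ absorbs both the factor $3$ and the slack in the lower bound $p(n)>c\,n\log n$; I would pin down the constant $c$ from the explicit inequality $p(n)>n\log n$ valid for $n\geq6$ (Rosser--Schoenfeld), checking that $\frac{17}{5}\cdot\frac13=\frac{17}{15}>1$ leaves enough room.

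The main obstacle is not the decomposition argument, which is clean, but verifying that the numeric margin is truly safe across the whole range: one must confirm that $\frac{163}{47}$ sits strictly above $\frac{78}{23}$ with enough slack that the inequality $\frac{23}{78}u(n)>p(n)$ is \emph{strict}, and that the parity remark (equality $q(i)=\frac N2$ or $q(i)=\frac N6$ cannot occur) is preserved in the open-interval form $\left(\overset{\displaystyle \ast}\ast\right)$ so that the summands are genuinely interior. The other delicate point is making the passage from $3p(n)\geq K$ to $n\log n>\frac5{17}K$ fully rigorous with an explicit, not merely asymptotic, lower bound on $p(n)$; I would cite the effective inequality $p(n)>n\log n$ (valid for all $n\geq6$) to close this, noting that the finitely many small $n$ are covered directly by the appendix calculations, which already establish $\frac{u(n)}{p(n)}\leq\frac{163}{47}$ for $n<7496$.
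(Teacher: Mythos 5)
Your core argument is exactly the paper's: assume $\frac{u(n)}{p(n)}\geq\frac{163}{47}$, apply $H(K,26)$ to the odd prime $u(n)$, bound each summand from below by $\frac{u(n)}3-\frac{u(n)}{26}=\frac{23}{78}u(n)$, and check that this exceeds $p(n)$ (your comparison $\frac{163}{47}>\frac{78}{23}$ is correct and is equivalent to the paper's computation $\frac{23}{78}\cdot\frac{17}5=\frac{391}{390}>1$), so all three summands lie in $S(n)$ and $u(n)$ is not an atom, a contradiction. Your side worries are harmless but unnecessary: strictness and parity pose no problem, and the appendix calculations are not needed because the corollary asserts nothing below the threshold.

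However, there is a genuine gap in your applicability step, i.e.\ in showing $u(n)\geq K$ so that $H(K,26)$ may be invoked at all. You round the contradiction hypothesis down to $u(n)>3p(n)$ and then want $3p(n)\geq K$, which via Rosser's bound $p(n)>n\log n$ requires $n\log n\geq\frac K3$. But the stated threshold is $n\log n>\frac5{17}K$, and $\frac5{17}<\frac13$: for $n$ with $\frac5{17}K<n\log n<\frac13K$ your chain yields only $3p(n)>\frac{15}{17}K$, which falls short of $K$. Your claim that $\frac5{17}$ ``absorbs both the factor $3$ and the slack in the lower bound'' is backwards --- there is no slack to absorb, since Rosser's inequality has constant exactly $1$ and $p(n)\sim n\log n$, so no constant better than $1$ is available asymptotically. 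The paper avoids this by keeping a sharper rational approximation: since $\frac{163}{47}>\frac{17}5$ (as $163\cdot5=815>799=17\cdot47$), the contradiction hypothesis gives $u(n)\geq\frac{163}{47}p(n)>\frac{17}5p(n)>\frac{17}5\,n\log n>\frac{17}5\cdot\frac5{17}K=K$. The constant $\frac5{17}$ in the threshold is precisely the reciprocal of $\frac{17}5$; replacing $\frac{17}5$ by $3$ is exactly what your argument cannot afford. The fix is immediate, but as written the proof fails on part of the stated range of $n$.
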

\begin{proof}[Proof of corollary \ref{CorH26}] One has $\frac{163}{47}>\frac{17}5$. Let $n\log n>\frac5{17}K$, hence $p(n)>\frac5{17}K$. Assume $u(n)\geq\frac{163}{47}p(n)>\frac{17}5p(n)>K$.

By $H(K,26)$, $u(n)$ can be written as a sum $u(n)=q(1)+q(2)+q(3)$ with prime numbers $q(i)$ such that
\[|\frac{u(n)}3-q(i)|\leq\frac{u(n)}{26}\text{, hence also}\]
\[q(i)\geq\frac{u(n)}3-\frac{u(n)}{26}=\frac{23}{78}u(n)\geq\frac{23}{78}\cdot\frac{17}5p(n)>p(n)\]
and $u(n)$ would not be atom, contradiction.
\end{proof}
\appendix
\section{Appendix}
\label{App_A}
The program listed below can be run using the \emph{GAP} software system (\cite{gap}) where the package \emph{NumericalSgps} (\cite{NumericalSgps}) has been installed:
\begin{lstlisting}{gapcode}
LoadPackage("NumericalSgps");
primes := Filtered([0..1000000],IsPrime);
n := 0;
while n < 7496 do
    n := n+1; #makes p the n-th prime
    p := primes[n];
    S := NumericalSemigroup(Filtered([p..6*p],IsPrime));
    #by the assertion below f < 4*p, so u<6*p
    #and S is the generated by all primes >=p
    g := GenusOfNumericalSemigroup(S);
    m := Multiplicity(S);
    f := FrobeniusNumber(S);
    A := MinimalGeneratingSystemOfNumericalSemigroup(S); #Atoms
    e := Length(A);
    u := Maximum(A);
    s := 1+f-g;
    Assert(0, n<2 or u>2*p);
    Assert(0, s>n-1);
    Assert(0, e>n);
    Assert(0, (n=8  and 5*n=4*e)    or (5*n<4*e));
    Assert(0, (n=15 and 47*u=163*p) or (n<>15 and 47*u<163*p));
    Assert(0, (n=8  and 19*f=63*p)  or (n<>8  and 19*f<63*p));
    Assert(0, (n=8  and 19*s=24*p)  or (n<>8  and 19*s<24*p));
    Assert(0, (n<=46) or ( ((u - 3*p)^10) <  p^7 ));
    Assert(0, (n=30 and f-3*p=n) or (f-3*p<n));
    Assert(0, (n<=30) or ((f-3*p)^10 < p^7 ));
    Assert(0, (n<=13) or (8*u >= 23*p));
od;
\end{lstlisting}

\end{document}